\documentclass[12pt]{amsart}

\theoremstyle{plain}

\numberwithin{equation}{section}

\usepackage{comment}
\usepackage{amsmath}
\usepackage{fancybox,color}
\usepackage{latexsym}
\usepackage{mathrsfs}
\usepackage{textcomp}
\usepackage{graphicx}

\usepackage{pxfonts}
\usepackage{esint}
\usepackage{textcomp}
\usepackage{vmargin}
\usepackage{bbm}
\usepackage[active]{srcltx}

\usepackage{vmargin}

\setmarginsrb{30mm}{20mm}{30mm}{20mm}{10mm}{10mm}{10mm}{10mm}

\numberwithin{equation}{section}

\newtheorem{theo}{Theorem}[section]
\newtheorem{lem}[theo]{Lemma}

\newtheorem{cor}[theo]{Corollary}

\newtheorem{defi}[theo]{Definition}
\newtheorem{rem}[theo]{Remark}

\input{epsf}

\def\C{{\rm\kern.24em \vrule width.02em height1.4ex
depth-.05ex\kern-.26em C}}
\def\N{{\rm I\hspace{-0.4ex}N}}
\def\R{{\rm I\hspace{-0.4ex}R}}
\def\B{{\rm I\hspace{-0.4ex}B}}
\def\D{{\rm I\hspace{-0.4ex}D}}

\def\be{\begin{equation}}
\def\ee{\end{equation}}
\def\beq{\begin{equation}}
\def\eeq{\end{equation}}

\input{epsf}

\begin{document}

     \title{A note on $p$-harmonic measure}

\author[J. G. Llorente]{Jos\'{e} G. Llorente}
\address{Departamento de Matem\`aticas, Universidad Complutense de Madrid, 28045 Madrid, Spain}
\email{josgon20@ucm.es}

\subjclass[2020]{31C45, 35J60, 31A15}

\keywords{$p$-laplacian, $p$-harmonic measure, quasiradial functions.}

\thanks{Partially supported by grant PID 2021-123151NB-I00 (Spain).}

\maketitle

\begin{abstract}  We identify the optimal decay exponent for the $p$-harmonic measure of a spherical cap in the unit ball of $\R^n$, complementing previous results of different authors in the ball and the unit disc.
\end{abstract}
\noindent


\section{Introduction}\label{Intro}

If $\Omega \subset \R^n$ is a domain, $E\subset \partial \Omega$ and $x\in \Omega$ then the \textit{harmonic measure} of $E$ in $\Omega$ from $x$, denoted $\omega(E, x, \Omega)$, is the value at $x$ of the harmonic function in $\Omega$ with boundary values $1$ on $E$ and $0$ on $\partial \Omega \setminus E$, provided $\Omega$ and $E$ are sufficiently regular. From the linearity of the Laplace operator it follows that $\omega ( . \, , x, \Omega )$ is a probability measure on $\partial \Omega$  for each $x\in \Omega$ and Harnack property implies that if $x, y \in \Omega$ then $\omega ( . \, ,x, \partial \Omega )$ and $\omega ( . \, ,y, \partial \Omega)$ are mutually absolutely continuous. From a probabilistic point of view, $\omega (E,x, \Omega) $ is the probability that brownian motion starting at $x$ hits $\partial \Omega$ for the first time at $E$. The study of the metric properties of harmonic measure and its connection to the geometry of  $\partial \Omega $ plays a fundamental
role in the interplay of  Geometric Function Theory, Potential Theory, PDE's, Probability and Dynamical Systems, among other fields.

\

Whereas harmonic functions in a domain minimize the $2$-energy of the gradient with prescribed boundary valuesand the Laplace equation is the corresponding Euler-Lagrange equation, for $1< p< \infty$ the Euler-Lagrange equation associated to  minimization of the $p$-energy is the non-linear $p$-harmonic equation:
\begin{eqnarray}\label{plaplace}
\triangle_p  u \equiv \textrm{div}\left( |\nabla u |^{p-2} \nabla u \right) = 0  \, , \, \, \, \, \, \, \, (1 < p
< \infty ).
\end{eqnarray}
where $\triangle_p$ is the so called $p$-\textit{laplacian} operator and weak solutions of \eqref{plaplace} in the Sobolev space
$W^{1,p}_{\textrm{loc}}(\Omega )$ are called \textit{$p$-harmonic} functions in $\Omega$. Note that $\triangle_2 $ is the Laplace operator, so the classical case is recovered for $p = 2$. We refer to \cite{Li}, \cite{HKM} for basic properties of $p$-harmonic functions and general background on nonlinear potential theory.

\

The formal limit of $\triangle _p $ when $p \to \infty $ is the so called
\textit{infinity laplacian} $\triangle_{\infty}$, a
(non-divergence) differential operator given by
\begin{eqnarray}
\triangle_{\infty}u =  \sum_{i,j=1}^N u_{x_i}u_{x_j} u_{x_i x_j}
\label{infty}
\end{eqnarray}
and viscosity solutions of $\triangle_{\infty}u = 0$
are called \textit{infinity harmonic} functions.

\

For $p\not=2$, the definition of $p$-harmonic measure $\omega_p (. \, ,
x, \Omega ) $ mimics the previous potential theoretic approach in the linear case $p = 2$ (see
\cite{HKM} for technical details). However, because of the nonlinearity, $p$-harmonic measures are
more difficult to work with  and lack some  nice properties of the case $p=2$. First, $p$-harmonic measure is no longer a measure if $p\neq 2$, not even at the zero level (see \cite{LLMW}). Second, if $\Omega = \mathbb{\B}$  the unit ball in $\R^n$, or  $\R^n_{+}$, the upper half space in $\R^n$), $E \subset \partial \Omega$, $a \in \Omega$ and $p=2$ then it is easy to estimate $\omega(E,a,\Omega)$ by using the explicit expression of the Poisson kernel in the unit ball or the upper half-space. The situation is much  more complicated
when $p\neq 2$, even if $n=2$ and $E$ is a simple subset of the unit circle or the real line, like an arc or an interval. Nevertheless, having estimates of $\omega_p ( \partial \Omega \cap B(\xi , t), a, \Omega )$ for small $t$, where $\xi \in \partial \Omega $ is a relevant problem in Nonlinear Potential Theory (here $B( \xi, t)$ stands for the euclidean ball centered at $\xi$  of radius $t$  which, if $n = 2$, we will simply denote by $D( \xi, t)$). The following definitions go in this direction.

 \begin{defi}\label{defpower}
Let $\Omega \subset \R^n $ a domain, $a\in \Omega$ a fixed point and $1 < p < \infty$.
\begin{enumerate}
\item[a)] We say that $\Omega$ satisfies a $p$-\textbf{Upper Power Law} of exponent $q>0$ if there exist constants $A>0$  and $t_0 >0$, only depending on $p$, $n$, $\Omega$ and $a$,  such that
 \begin{equation}\label{uppowerlaw}
 \omega_p ( \partial \Omega \cap B(\xi , t), a, \Omega ) \leq A \, t^q
 \end{equation}
for all $\xi \in \partial \Omega$ and each $0 < t \leq t_0$.
\item [b)] We say that $\Omega$ satisfies a $p$-\textbf{ Lower Power Law} of exponent $q>0$ if there exist constants $B>0$ and $t_0 >0$, only depending on $p$, $n$, $\Omega$ and $a$,  such that
 \begin{equation}\label{lowpowerlaw}
 \omega_p ( \partial \Omega \cap B(\xi , t), a, \Omega ) \geq B \,  t^q
 \end{equation}
 for all $\xi \in \partial \Omega$ and each $0 < t \leq t_0$.
\item[c)] We say that $\Omega$ satisfies a $p$-\textbf{Power Law} of exponent $q>0$ if it satisfies both the upper and lower law with exponent $q$ or, equivalently, if there exist constants $0 < B \leq A$ and $t_0 >0$, only depending on $p$, $n$, $\Omega$ and $a$,  so that
    \begin{equation}\label{powerlaw}
  B \, t^q \leq   \omega_p ( \partial \Omega \cap B(\xi , t), a, \Omega ) \leq A \, t^q
    \end{equation}
    for all $\xi \in \partial \Omega$ and each $0 < t \leq t_0$.
\end{enumerate}
\end{defi}

\begin{defi}\label{defasypower}
We say that $\Omega$ satisfies a $p$-\textbf{Asymptotic Power Law} of exponent $q>0$  if
\begin{equation}
\lim_{t\to 0} \frac{\log \omega_p ( \partial \Omega \cap B(\xi , t), a, \Omega )}{\log t} = q
\end{equation}
for each $\xi \in \partial \Omega$.
\end{defi}

\subsection{Previous results}\label{prevsub}

One of the first results giving precise asymptotic estimates for $p$-harmonic measure was obtained by Peres, Schramm, Sheffield, and Wilson
(\cite{P}) in the case $p = \infty$. They proved that $\B$, the unit ball of $\R^n$, satisfies a $\infty$- Power Law of exponent $1/3$, that is,
\begin{eqnarray}
\omega_{\infty} (B(\xi, t) \cap \partial \B , 0,\B ) \approx t^{1/3}
\label{inftymeas}
\end{eqnarray}

\

As for the case $1 < p < \infty$, Lundstr\"om and Vasilis (\cite{Lu}) obtained sharp estimates for $\omega_p (E,a, \Omega)$ when $n=2$ and $\Omega$ is a planar domain satisfying certain smoothness assumptions. If $1 < p < \infty$, let
\begin{equation}\label{q}
q = q(p) =\frac{3-p + 2\sqrt{p^2 -3p +3}}{3(p-1)}
\end{equation}

By specializing to the case when $\Omega = \D$ (the unit disc) or $\Omega = \R^2_+$ (the upper half-plane), it is proved in \cite{Lu} that

\begin{eqnarray}
& \hspace{-0.2cm} \R^2_+ & \text{satisfies a} \, p\text{-Power Law of exponent} \, q \, \text{for any} \,  p \in (1, \infty ). \label{phalf}   \vspace{0.2cm} \\
& \hspace{-0.12cm} \D & \text{satisfies a } \, p\text{-Power Law of exponent} \, q \, \text{for any} \, p \in [2, \infty ). \label{pdisc} \vspace{0.12cm} \\
& \hspace{-0.2cm} \D & \text{satisfies a} \,  p\text{-Upper Power Law of exponent} \, q \,  \text{for any} \, p \in (1,2]. \label{pdiscup}
\end{eqnarray}
where $q = q(p)$ is given by \eqref{q}.

\

One of the main technical tools needed in \cite{P} and \cite{Lu}  is the use of Aronsson's \textit{singular quasiradial $p$-harmonic functions} (\cite{Ar}) which play, somehow, the role of the Poisson kernel for $p=2$. Recall that the Poisson kernel in $\R^n_+$ with pole at $a \in \R^{n-1}$ is given by

\begin{equation}\label{Poisson}
P(z,a) = \frac{y}{[(x-a)^2 + y^2]^{n/2}}
\end{equation}
where $z = (x,y) \in \R^n_+$, $x \in \R^{n-1}$ and $y>0$. If $a = 0$, $r =  |z| = (|x|^2 + y^2)^{1/2}$ and $\alpha$ is the latitude angle between $z$ and the hyperplane $\R^{n-1}$ then we get the expression of  the Poisson kernel with pole at the origin in polar coordinates:
$$
P(z) \equiv P(z,0) = r^{1-n} \sin \alpha
$$
so the singularity at the origin is of order $r^{1-n}$. Note that the Poisson kernel is harmonic and positive in $\R^n_+$, has boundary values $0$ at $\R^{n-1} \setminus \{ 0\}$ and blows up nontangentially to $\infty $ at $0$. If $p\not=2$, the analogue role of the Poisson kernel in $\R^n_+$ is played
by singular quasiradial functions of the form $h = r^k f(\alpha )$ with $k<0$, where $\alpha$ denotes the latitude angle as above. When using polar coordinates it follows that $\triangle_p h $ is given (up to a positive factor) by the left hand side in the following nonlinear ODE:
\begin{eqnarray}\label{ode}
[(p-1)(f')^2 + k^2 f^2]f'' + k[(2p-3)k + n-p]f(f')^2 +  \nonumber \\ k^3[k(p-1) + n-p]f^3 - (n-2)[(f')^2 + k^2 f^2 ]f' \tan \alpha = 0
\end{eqnarray}

\

\noindent (See \cite{LLMTW}, \cite{deb}, \cite{deb2} and note that, since those  authors use the co-latitude angle $ \theta = \pi /2 - \alpha$ instead of $\alpha$, there is a sign change in the four summand, coming from $f'$; see equation (1.11) in \cite{LLMTW}). The key point is that for each $p$ with $1 < p < \infty$ there exists a unique value of $k = k(p,n) <0$ and $f: [0, \pi/2] \to \R \in C^2 [0, \pi/2]$ verifying
\begin{eqnarray}
& f(0)  = 0, \, f(\pi/2) = 1, \,  f'(\pi /2) = 0 \,  \textnormal{and} \, \, 0
< f'(0) < \infty. \label{f1} \\
& f  \, \textnormal{is increasing in} \,  (0, \pi /2)  \label{f2}
\end{eqnarray}
and such that $f$  satisfies \eqref{ode} so $h = r^k f( \alpha)$ is positive and $p$-harmonic in $\R^n_+$ (see \cite{LN}, \cite{PV} , \cite{T} for PDE approaches and \cite{LLMTW} for a purely ODE proof based on shooting techniques).  Then $h$ has boundary value $0$ at $\R^{n-1} \setminus \{ 0\}$ and a singularity of order $r^{k}$ at $0$. We will refer to $h$ as the \textit{singular quasiradial} $p$-harmonic function in the half-space $\R^n_+$ with pole at $0$. Note that if $n=2$ then the last term in \eqref{ode} disappears and the corresponding ODE becomes autonomous and simpler to handle. It turns out that $k(p,2) = -q$, where $q$ is given by \eqref{q} (see \cite{Ar}, \cite{Lu}). On the other hand, if $n\geq 3$ then \eqref{ode} is much more difficult to handle and the critical value $k(p,n)$ cannot be explicitly computed.  We summarize some specific cases where the critical exponent $k(p,n)$ is known:

\begin{itemize}
\item $k(2,n) = -(n-1)$ (Poisson kernel). \vspace{0.12cm}
\item $ \displaystyle  k(p,2) = -q = - \frac{3-p + 2\sqrt{p^2 -3p +3}}{3(p-1)} $ (\cite{Ar}, \cite{Lu}). \vspace{0.12cm}
\item $k(n,n) = -1$ (the conformal case, see  \cite{Hi}). \vspace{0.12cm}
\item $k(\infty , n) = - \frac{1}{3}$ (see \cite{P}).
\end{itemize}

\

Furthermore $k(p,n) \leq -1$ if $1 < p  \leq n$ and $k(p,n) \geq -1$ if $p \geq n$ (see the Appendix for an elementary argument not appealing to monotonicity).

\

The interest in singular quasiradial functions relies on the following fact:  by means of comparison arguments, the $p$-superharmonicity (resp. $p$-subharmonicity) of the singular quasiradial function $h = r^k f( \alpha)$ in $\R^n_+$ implies a $p$-Upper Power Law  of exponent $q = - k$ (resp. a $p$-Lower Power Law) for $p$-harmonic measure in $\R^n_+$ (see \cite{LLMTW}, section 2). Obtaining upper or lower laws for $p$-harmonic measure in the half-space then reduces to studying the $p$-superharmonicity or $p$-subharmonicity of such singular quasiradial functions. In particular, $\R^n_+$ satisfies a $p$-Power Law of exponent $q = -k(p,n)$ (Corollary 1.3 in \cite{LLMTW}). See also \cite{LLMTW} and \cite{deb2} for upper and lower estimates of $k$  implying directly upper and lower laws for $p$-harmonic measure in $\R^n_+$, depending on different ranges of $p$.

\

\subsection{Main results}\label{mainsub}

Let $t>0$ be small. By rotational invariance, it is enough to consider  spherical caps on $\partial \B$  centered at the point $\mathbf{1} = (1, 0,...,0)$. Furthermore, it will be convenient to replace $B(\mathbf{1},t) \cap \partial \B$ by the spherical cap $J(t)$ consisting of those points on $\partial \B$ forming an angle not greater than $t$ with $\mathbf{1}$; more precisely:
\begin{equation}\label{defJ}
J(t) = \{ x\in \partial \B : \, < x, \mathbf{1} > \, \geq \cos t \}
\end{equation}
where $<\cdot , \cdot >$ stands for the usual dot produt in $\R^n$. Note that $J(t) = \{ e^{i\theta}: \, |\theta| \leq t   \}$ if $n = 2$.

\

The extension of the $p$-Power Law in $\R^n_+$ (recall Corollary 1.3 in \cite{LLMTW}) to the ball $\B$ is by no means direct, mainly because of the lower law. Still in the case of the unit disc $\D$, observe that  $\eqref{pdiscup}$ provides only the upper law in the range $1 < p < 2$, so there is a gap for the lower law. The reason of this gap is, roughly speaking, that the lower law requires the control of the singular $p$-harmonic quasiradial function $h = r^k f( \alpha)$ on the boundary of certain subdomain of $\D$. It turns out that if $1 < p < 2$, the level curves of $h$ are flatter than $\partial \D$, which causes technical difficulties when performing certain comparison arguments. This obstacle is not present if $p\geq 2$ or in $\R^n_+$ (the boundary is totally flat). The way we will circumvent such an obstacle is to obtain an adjusted comparison result (Lemma \ref{majorilem} below). We refer to the beginning of section  \ref{majori} for further details.

\

Our main goal in this note is to extend Lundstr\"{o}m-Vasilis results (in the case of $\D$) to the unit ball $\B$ and to the range $1 < p <2$. The following is the main result of the note. We recall that $k(p,n)$ is the critical exponent described in the previous section.

\begin{theo}\label{mainth}
There exist $0 < t_0 < 1$ and $\beta , b_1 , b_2 >0 $, only depending on $p$ and $n$, such that, if $0 < t \leq t_0$,
\begin{eqnarray}
 b_1 t^q \leq \omega_p (J(t), 0, \B ) \leq b_2 \, t^q & \quad \text{if} \, \, & p \geq n \label{mainth1} \\
 b_1 \frac{t^q}{ \big ( \log \frac{1}{t} \big )^{\beta}} \leq \omega_p (J(t), 0, \B ) \leq b_2 \, t^q & \quad \text{if} \, \, & 1 < p < n \label{mainth2}
\end{eqnarray}
where $q = -k(p,n)$.
\end{theo}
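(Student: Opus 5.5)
The plan is to transplant the half-space singular quasiradial function $h=r^k f(\alpha)$, $k=k(p,n)=-q$, to the ball by placing its pole at the boundary point $\mathbf{1}$, and then to run comparison arguments. Concretely, for $x\in\B$ set $r=|x-\mathbf{1}|$ and let $\alpha$ be the angle between $x-\mathbf{1}$ and the tangent hyperplane to $\partial\B$ at $\mathbf{1}$. In the half-space picture based at $\mathbf{1}$ with inward normal $-\mathbf{1}$, this gives the function $H(x)=r^k f(\alpha)$. Near $\mathbf{1}$ the ball lies inside that half-space, so $H$ is $p$-harmonic and positive in $\B$. To get bounds valid for every cap, and not only for the cap centered at the pole, I would also use scaled and translated copies $H_t(x)=t^{q}H(x)$, or copies with pole moved slightly outside $\B$ along the normal, at $\mathbf{1}+t\mathbf{1}$.

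\textbf{Upper bound.} Let $u=\omega_p(J(t),\cdot,\B)$. Place the pole of $H$ at $\mathbf{1}$ but use the half-space tangent to a slightly larger ball, or equivalently shift the pole inward. The aim is that on $J(t)$ we have $t^{q}H\geq c>0$: points of $J(t)$ satisfy $r\lesssim t$, and $f(\alpha)$ is bounded below there because $J(t)$ meets the boundary hyperplane only at the pole after the shift. On $\partial\B\setminus J(t)$ we have $H\geq0$. The comparison principle then gives $u\le C t^{q}H$ in $\B\cap\{r>2t\}$. At the point $x=0$ we have $r=1$ and $\alpha=\pi/2$, so $u(0)\le Ct^q$. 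Since $H\ge 0$ on $\partial \B$ automatically, this direction should work for all $1<p<\infty$, in agreement with \eqref{pdiscup}. I would handle the region near the pole by a Harnack chain and boundary Harnack argument, as in \cite{LLMTW}, section 2.

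\textbf{Lower bound.} Here we need a $p$-subsolution $v\le u$ with $v(0)\gtrsim t^q$. I take $v=t^{q}(H-M)_+$-type truncations, with the pole placed outside the ball at distance $\sim t$ from $\mathbf{1}$. Then $v$ must be $\le 0$ on $\partial\B\setminus J(t)$ and $\le 1$ on $J(t)$. The difficulty is that on $\partial\B$ the angle $\alpha$ with respect to the tangent plane is positive, of order $r$, so $H\approx r^{k}f'(0)\,r=f'(0)\,r^{k+1}$ on $\partial\B$ away from the pole. When $p\ge n$ we have $k\ge-1$, so this boundary contribution is bounded and the subtraction of a constant (a multiple of $t^{q}$ after scaling, harmless) suffices. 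This yields \eqref{mainth1}.

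\textbf{The case $1<p<n$ and the main obstacle.} When $k<-1$, the term $r^{k+1}$ blows up near the pole along $\partial\B$: the level sets of $H$ are flatter than the sphere, as noted before Lemma \ref{majorilem}. This step is the main obstacle. I would invoke the adjusted comparison result, Lemma \ref{majorilem}, by decomposing dyadically. On annuli $A_j=\{2^{j}t\le r\le 2^{j+1}t\}\cap\B$ the boundary excess is controlled by $(2^jt)^{k+1}$. I would subtract a correcting subsolution, or iterate the comparison across the $\sim\log(1/t)$ dyadic scales, each time losing a fixed multiplicative constant $c<1$. In other words, comparing $u$ with $H$ on successive domains $\B\cap\{r>2^jt\}$ where the mismatch is absorbed loses a factor $\lambda$ per step. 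Over the $N\approx\log_2(1/t)$ steps this gives the loss $\lambda^{N}$; to make it a power of $\log(1/t)$ rather than of $t$, the losses must be summable in the form $\prod(1-C/j)\approx N^{-\beta}$. That is the logarithmic factor $(\log\frac1t)^{-\beta}$ in \eqref{mainth2}. Getting the per-scale loss to be $1-C/j$ rather than a fixed constant is precisely what I expect Lemma \ref{majorilem} to supply, and it is the delicate point.
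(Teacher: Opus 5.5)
The case $1<p<n$, which is the real content of the theorem, has a genuine gap. Nothing in Lemma \ref{majorilem} produces a per-scale loss $1-C/j$. With a fixed loss $\lambda<1$ per dyadic annulus over $\log_2\frac1t$ annuli you only get $t^{q+\log_2(1/\lambda)}$, which has the wrong exponent. The missing idea is where the comparison is evaluated and how the scales are iterated.

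Lemma \ref{majorilem} is used once, at scale $t$, with the pole at distance $t$ inside the ball. It gives $t^qh\le C_1\omega+C_2t^{(q+1)/2}$ in $G$. The error comes from the spherical piece $E_3$, on which $r\ge\sqrt t$ and hence $t^qr^{1-q}\le t^{(q+1)/2}$. This error cannot be beaten at a fixed interior point. It is beaten at height $\sqrt t$ above the pole, where $t^qh=t^{q/2}$, and this gives $\omega_p(J(t),(1-\sqrt t)\mathbf{1},\B)\ge c\,t^{q/2}$ (Lemma \ref{sqrtlem}).

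Three further steps follow: comparison in $B(0,1-\sqrt t)$, Harnack on $(1-\sqrt t)J(\sqrt t)$, and dilation invariance. Together they give $\omega_p(J(t),0,\B)\ge C\,t^{q/2}\,\omega_p(J(\sqrt t),0,\B)$ (Lemma \ref{resclem}). Equivalently, $\phi(t^2)\ge C\phi(t)$ for $\phi(t)=t^{-q}\omega_p(J(t),0,\B)$. The loss per step is a \emph{fixed} constant, but the scales are $t,t^{1/2},t^{1/4},\dots$. So only $N\approx\log_2\log\frac1t$ steps are needed to reach a fixed scale, and $C^N=(\log\frac1t)^{-\beta}$ with $\beta=\log_2\frac1C$. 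The logarithm comes from this doubly exponential sequence of scales, not from a product $\prod_j(1-C/j)$.

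Your lower bound for $p\ge n$ also fails as set up. Put the pole at distance $t$ \emph{outside} $\B$, and take a point of $\partial\B$ at angle $\theta$ from $\mathbf{1}$ with $t\ll\theta\ll1$. Then $\sin\alpha\approx(t+\theta^2/2)/\theta$, so $t^qH\approx f'(0)(t/\theta)^{q+1}$ for $\theta\ll\sqrt t$. At $\theta\approx 2t$, just outside $J(t)$, $\alpha$ is bounded below and $t^qH$ is of order one. Your estimate $\sin\alpha\approx r$ holds only for $\theta\gg\sqrt t$. It also holds with the pole exactly on $\partial\B$, but then the comparison principle cannot be applied without excising the singularity, which you do not address. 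So forcing $t^q(H-M)_+\le 0$ on $\partial\B\setminus J(t)$ requires subtracting a constant of order one, and this destroys $v(0)\approx t^q$.

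The paper instead puts the pole \emph{inside}, at distance $t$ from the center of the cap. It compares $t^qh$ with the $p$-harmonic function $C_1\omega+C_2t^q$ on $G=B\cap\{y>0,\ |z|>t\}$, checking the three boundary pieces as follows.
On the flat piece $E_2$, $h=0$; this piece cuts off the part of the sphere within distance about $\sqrt{2t}$ of the cap's center, exactly where your boundary values are too large.
On $E_1=\{|z|=t\}$, $t^qh\le 1\le C_1\omega$ by the Carleson estimate $\omega\ge c$.
On $E_3$, Lemma \ref{geomlem} gives $t^qh\le Ct^qr^{1-q}\le 2Ct^q$ since $q\le 1$.
Evaluating at a fixed point $(0,\dots,0,r_0)$ with $r_0^{-q}\ge 2C_2$ and applying Harnack then gives \eqref{mainth1}.

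Your upper-bound idea is fine, though the wording ("or equivalently shift the pole inward") is muddled. With the pole at $(1+t)\mathbf{1}$ one has $t^qH\ge c$ on $J(t)$ and $H\ge 0$ on the closed ball, so the bound is immediate with no excision or Harnack chain needed. The paper simply cites it.
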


\begin{cor}\label{maincor}
There exist $0 < t_0 < 1$, $\beta , b_1 , b_2 >0 $, only depending on $p$ such that, if $0 < t \leq t_0$ then
\begin{eqnarray}
 b_1 t^q \leq \omega_p (J(t), 0, \D ) \leq b_2 t^q & \quad \text{if} \, \, & p \geq 2 \label{maincor1} \\
b_1 \, \frac{t^q}{ \big ( \log \frac{1}{t} \big )^{\beta}} \leq \omega_p (J(t), 0, \D ) \leq b_2 t^q  & \quad \text{if} \, \, & 1 < p < 2 \label{maincor2}
\end{eqnarray}
where $q$ is given by \eqref{q}.
\end{cor}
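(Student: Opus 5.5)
The statement to prove is Corollary \ref{maincor}. The plan is to specialize Theorem \ref{mainth} to the planar case $n=2$. When $n=2$ the unit ball $\B$ is the unit disc $\D$. The cap defined in \eqref{defJ} is then the arc $J(t)=\{e^{i\theta}: |\theta|\le t\}$, as already noted after \eqref{defJ}.

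The dichotomy in Theorem \ref{mainth} is $p\ge n$ versus $1<p<n$. With $n=2$ this is exactly $p\ge 2$ versus $1<p<2$. So \eqref{mainth1} yields \eqref{maincor1}, and \eqref{mainth2} yields \eqref{maincor2}.

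The exponent appearing in Theorem \ref{mainth} is $q=-k(p,n)$. For $n=2$ I would invoke the explicit value recorded in the list of known critical exponents, namely $k(p,2)=-q$ with $q$ given by \eqref{q}. This value is due to Aronsson and Lundstr\"om--Vasilis. It can also be seen from \eqref{ode}: for $n=2$ the $\tan\alpha$ term drops out and the equation becomes autonomous. Hence $-k(p,2)$ coincides with the exponent in \eqref{q}.

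The constants $t_0,\beta,b_1,b_2$ in Theorem \ref{mainth} depend only on $p$ and $n$. Once $n=2$ is fixed they depend only on $p$, as the corollary asserts.

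There is no real obstacle here. The only point needing care is the identification $k(p,2)=-q(p)$, which the paper takes from the literature. Everything else is a direct substitution $n=2$ into Theorem \ref{mainth}.
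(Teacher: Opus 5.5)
Your proposal is correct and matches the paper: the corollary is obtained by setting $n=2$ in Theorem \ref{mainth}, using the identification $k(p,2)=-q(p)$ from Aronsson and Lundstr\"om--Vasilis. One small point: the fact that \eqref{ode} becomes autonomous when $n=2$ is what makes $k(p,2)$ computable, but it does not by itself yield \eqref{q}, so that step rests on the citation rather than on your ``hence''.
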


From  Theorem \ref{mainth} we deduce:
\begin{cor}\label{maincorqual}
$\B$ satisfies a $p$-Asymptotic Power Law with exponent $q = - k(p,n)$ for any $p \in (1, +\infty)$. In particular, $\D$ satisfies a $p$-Asymptotic Power Law with exponent $q$ for any $p\in (1, +\infty)$, where $q$ is given by  \eqref{q}.
\end{cor}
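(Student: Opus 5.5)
We need to prove Corollary \ref{maincorqual} from Theorem \ref{mainth}. The plan is a direct computation of the logarithmic ratio, followed by a comparison between Euclidean balls and the caps $J(t)$.

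\textbf{Step 1 (caps $J(t)$).} Fix $\xi \in \partial\B$. By rotational invariance of the $p$-Laplacian and of $\B$, together with $0$ being the center, we have $\omega_p(\partial\B\cap B(\xi,t),0,\B)=\omega_p(\partial\B\cap B(\mathbf{1},t),0,\B)$. So it suffices to work at $\mathbf{1}$. Take logarithms in \eqref{mainth1}--\eqref{mainth2}. For $0<t\le t_0<1$ we have $\log t<0$, so the inequalities reverse on dividing. In the worst case $1<p<n$ this gives
$$
\frac{\log b_2}{\log t}+q \;\le\; \frac{\log \omega_p(J(t),0,\B)}{\log t}\;\le\; \frac{\log b_1}{\log t}+q-\beta\,\frac{\log\log\frac1t}{\log t}.
$$
As $t\to0$ we have $\log b_i/\log t\to0$ and $\log\log(1/t)/\log t\to 0$. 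Hence the limit equals $q$. The case $p\ge n$ is the same, without the $\beta$ term.

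\textbf{Step 2 (from caps to Euclidean balls).} A point $x\in\partial\B$ at angle $s$ from $\mathbf{1}$ satisfies $|x-\mathbf{1}|=2\sin(s/2)$. It follows that
$$
J(t)\subset \partial\B\cap B(\mathbf{1},t')\quad\text{for } t'>t, \qquad \partial\B\cap B(\mathbf{1},t)\subset J(2\arcsin(t/2)).
$$
For small $t$ we have $2\arcsin(t/2)\le 2t$, so
$$
J(t/2)\subset \partial\B\cap B(\mathbf{1},t)\subset J(2t).
$$
Next we use monotonicity of $p$-harmonic measure with respect to the set, which follows from the comparison principle for the Perron solutions defining $\omega_p$. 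This sandwiches $\omega_p(\partial\B\cap B(\mathbf{1},t),0,\B)$ between $\omega_p(J(t/2),0,\B)$ and $\omega_p(J(2t),0,\B)$. We also have $\log(ct)/\log t\to1$ for $c=1/2,2$. By Step 1 both bounding ratios tend to $q$, so the middle one does as well. This is exactly Definition \ref{defasypower}.

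\textbf{Step 3 (the disc).} Specialize to $n=2$. There $k(p,2)=-q$ with $q$ as in \eqref{q}, which gives the statement for $\D$ by the same argument, or directly from Corollary \ref{maincor}.

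The only point needing care is the monotonicity of $\omega_p$ in $E$ and the comparison between caps and balls. Both are routine, so there is no real obstacle: the logarithmic loss in \eqref{mainth2} is exactly what disappears in the asymptotic exponent.
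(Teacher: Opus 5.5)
Your proposal is correct and is essentially the deduction the paper leaves implicit (it only says ``From Theorem \ref{mainth} we deduce''): after taking logarithms, the factor $(\log\frac{1}{t})^{\beta}$ contributes only $\beta\log\log\frac{1}{t}/\log t\to 0$, and your inclusions $J(t/2)\subset\partial\B\cap B(\mathbf{1},t)\subset J(2t)$, together with monotonicity of $\omega_p$ in the set, supply the cap-to-ball step that the paper does not spell out. If the law is meant from an arbitrary fixed base point $a\in\B$ rather than from $0$, add that Harnack's inequality gives $\omega_p(E,a,\B)\approx\omega_p(E,0,\B)$ with constants independent of $E$, which does not change the limit.
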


\

\begin{rem}\label{uprem}\rm
The main novelty of Theorem \ref{mainth} relies on the lower bound in \eqref{mainth2} and the result seems to be new even in $\D$. \eqref{maincor1}  and the upper bound in \eqref{maincor2} are contained in \cite{Lu}. The upper estimates in \eqref{mainth1} and \eqref{mainth2} are contained in Theorem 2.1 in \cite{LLMTW}. We will then restrict to the lower estimates in the rest of the note.
\end{rem}

\begin{rem} \rm The logarithmic term in Theorem \ref{mainth} might be a consequence of the method of proof and could perhaps be avoided  by using  more sophisticated potential theoretical tools. On the other hand, our argument is elementary, clean and provide the asymptotic optimality of the exponent $q$.
\end{rem}

\

The structure of the note and the strategy to prove Theorem \ref{mainth} are as follows: section \ref{majori} contains the adjusted comparison result, Lemma \ref{majorilem}. In section \ref{scale} we deduce from Lemma \ref{majorilem} an intermediate estimate for the $p$-harmonic measure of $J(t)$ from the point $(1 - \sqrt t , 0,...,0)$ (Lemma \ref{sqrtlem}), in the range $1<p<n$. Then a rescaling trick provides a functional inequality for $\omega_p (I_t , 0 , \B )$ in terms of $t$ (Lemma \ref{resclem}). Finally, Section \ref{proofs} contains the proof of Theorem \ref{mainth}.

\section{Majorization}\label{majori}

Let $1 < p < \infty$ and $0 < t \leq 1/4$. In order to use $0$ as a pole of subsequent singular quasiradial functions, it will be convenient to perform the following technical reductions. By means of a rotation and a translation, we replace $0$ by the point $a = (0,...,0,1-t)$, the unit ball $\B$ by the ball $ B = B(a,1)$ and $J(t)$ by the cap  $C(t)$  given by
\begin{equation}\label{capdef}
C(t) = \{ x \in \partial B : \, < x-a, S-a> \, \geq \cos t  \}
\end{equation}
where $S = (0,...,0, -t)$.
Note that, if $n=2$, then  $C(t) = \{ i(1-t) + e^{i\theta}: \, |\theta + \pi/2 | \leq t \}  $.

Now, define
\begin{equation}\label{defG}
G = B \cap \{ z = (x,y)\in \R^n : \, y>0 \, , \, |z| > t   \}
\end{equation}
and note that $\partial G$ consists of three pieces:
\begin{eqnarray}
E_1 & = & \partial G \cap \{|z| = t \} \label{e1}\\
E_2 & = & \partial G \cap \{ y = 0 \} \label{e2}\\
E_3 & = &  \partial G \cap \partial B \label{e3}
\end{eqnarray}
The first result is a geometrical elementary estimate. Recall that for $z \in \R^n_+$ we denote by  $(r, \alpha )$ the polar coordinates of $z$, where $r = |z| $ and $\alpha$ is the latitude angle.

\begin{lem}\label{geomlem}
Let $0 < t \leq 1/4$ and $B, G$ as above. Suppose that $z\in E_3 $  has polar coordinates $(r, \alpha )$. Then $\sin \alpha \leq  r$.
\end{lem}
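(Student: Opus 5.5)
Since $\sin\alpha = y/r$ for $z=(x,y)$, the claim $\sin\alpha\le r$ is equivalent to $y \le r^2 = |z|^2$. I will verify this inequality directly from the equation of the sphere $\partial B$.

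A point $z=(x,y)\in E_3$ lies on $\partial B$, where $B=B(a,1)$ and $a=(0,\dots,0,1-t)$. Hence
\[
|x|^2 + (y-(1-t))^2 = 1 .
\]
Expanding,
\[
|z|^2 = |x|^2+y^2 = 1 - (1-t)^2 + 2(1-t)y = t(2-t) + 2(1-t)y .
\]
The target inequality $y\le |z|^2$ therefore becomes
\[
y \le t(2-t) + 2(1-t)y, \qquad\text{i.e.}\qquad 0 \le t(2-t) + (1-2t)y .
\]
For $0<t\le 1/4$ we have $1-2t>0$. Points of $E_3$ lie in $\overline G$, so $y\ge 0$. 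Both terms on the right are therefore nonnegative, which proves $\sin\alpha\le r$.

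There is no real obstacle. The only points needing care are that $y\ge0$ on $E_3$ (immediate from $E_3\subset\overline{G}\subset\{y\ge0\}$) and that $r>0$, so that the polar coordinates make sense. The latter holds because $|z|\ge t>0$ on $\overline G$.
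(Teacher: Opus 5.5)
Your proposal is correct and follows essentially the same route as the paper. The paper also expands the sphere equation to get $r^2 = 2t - t^2 + 2(1-t)r\sin\alpha$, solves for $\sin\alpha$, and bounds the quotient by $r$; your inequality $0 \le t(2-t) + (1-2t)y$ is the same estimate with denominators cleared, and it makes explicit the facts $y \ge 0$ and $1-2t>0$ that the paper uses tacitly.
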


\begin{proof}
If $z = (x,y) \in E_3 $ then $|x|^2 + [ y - (1-t)]^2 = 1$ so
$$
r^2 = 2t - t^2 + 2(1-t)r \sin \alpha
$$
Then
\begin{equation}\label{polar}
\sin \alpha = \frac{r^2 - (2t- t^2)}{2(1-t)r} \leq \frac{r^2}{r} = r
\end{equation}
\end{proof}

Now, for $1 < p < \infty$, let $h = r^k f( \alpha)$ be the singular quasiradial $p$-harmonic function in $\R^n_+$ with pole at $0$, where $k= k(p,n)$. If $q = -k(p,n)$, recall that $ q \geq 1$ if $1 < p \leq n$ and $q \leq 1$ if $p \geq n$ (see Appendix). 

\begin{lem}\label{majorilem}
Let $1 < p < \infty$, $0< t \leq 1/4$, $a = (0,...,0, 1-t) $, $B = B(a,1)$, $G$ as in \eqref{defG} and $q = -k(p,n)$. Let $h = r^{-q}f(\alpha)$ be the singular quasiradial $p$-harmonic function in $\R^n_+$ with pole at $0$, where $f$ satisfies \eqref{f1} and \eqref{f2}. There exist positive constants $C_1 , C_2$ depending only on $p$ and $n$ such that, for each $z \in G$,
\begin{alignat}{2}
 t^q h(z)  & \leq    C_1 \omega (z) + C_2 t^q  \, , \qquad   & p \geq n \vspace{0.12cm} \label{majorieq1}\\
 t^q h(z)  & \leq   C_1 \omega (z) + C_2 t^{\frac{q+1}{2}} \, , \qquad &   1 < p < n \label{majorieq2}
\end{alignat}
where $\omega (z) = \omega_p (C(t), z, B)$ if $z \in G$. 
\end{lem}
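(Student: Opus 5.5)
The plan is to apply the comparison principle for $p$-harmonic functions in $G$. Since $h$ is $p$-harmonic, a nonnegative constant multiple of $h$ is $p$-harmonic, and so is $C_1\omega + C_2\tau$ for any constant $\tau$ (adding a constant preserves $p$-harmonicity). Thus it suffices to check the inequality on $\partial G = E_1\cup E_2\cup E_3$. Take $\tau=t^q$ if $p\ge n$ and $\tau = t^{(q+1)/2}$ if $1<p<n$. Both sides are continuous up to the boundary of $G$ away from the corners, and $h$ is bounded on $\overline G$ because $|z|\ge t$ there. So the comparison principle in $G$ yields the inequality in all of $G$.

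\textbf{Step 1 ($E_2$).} On $E_2=\{y=0\}$ we have $\alpha=0$, so $h=0$ and there is nothing to prove.

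\textbf{Step 2 ($E_1$).} On $E_1$ we have $r=t$, hence $t^qh = f(\alpha)\le 1$. I need $\omega \ge c>0$ on $E_1$, uniformly in $t$. The set $E_1$ is the upper half of the sphere of radius $t$ centered at $0$, and the cap $C(t)$ sits around $S=(0,\dots,0,-t)$ with angular radius $t$, so it has size comparable to $t$. Rescale by $1/t$. Then $B$ becomes a ball of radius $1/t\ge 4$, $C(t)$ becomes a boundary piece of size $\approx 1$ containing a neighbourhood of the rescaled $S$, and $E_1$ becomes the unit upper hemisphere at distance $\approx 1$ from the cap. Since $p$-harmonic measure is invariant under dilation, $\omega\ge c(p,n)>0$ on $E_1$. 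This follows by comparing with a fixed configuration: inside the large ball, use a small ball of radius $\approx 1$ tangent near the cap, apply a barrier or Harnack chain, and note that $E_1$ stays a bounded distance from $\partial B\setminus C(t)$. Hence $t^qh\le C_1\omega$ on $E_1$ with $C_1=1/c$.

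\textbf{Step 3 ($E_3$, the main obstacle).} Here $\omega=0$ except on $C(t)$, so I need $t^qh\le C_2\tau$ on $E_3$. By \eqref{f1}, $f(\alpha)\le f'(0)\alpha$, or $f(\alpha)\le A\sin\alpha$ on $[0,\pi/2]$. By Lemma~\ref{geomlem}, $\sin\alpha\le r$ on $E_3$. Since $|z|\ge t$ in $G$ and $r\le 2$ on $E_3$, this gives
\[
t^qh\le A\,t^q r^{-q}\min(1,r)\le A\,t^q r^{1-q}.
\]
If $p\ge n$, then $q\le 1$, so $r^{1-q}\le 2^{1-q}$ and $t^qh\le C_2t^q$.

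If $1<p<n$, then $q\ge 1$ and $r^{1-q}$ is largest at small $r$, where the crude bound gives only $\approx t$. So I refine using the exact formula \eqref{polar}: $\sin\alpha \le (r^2-(2t-t^2))/(2(1-t)r)$. This is negative, so the point is not in $\overline{\R^n_+}$, unless $r^2\ge 2t-t^2$, that is, $r\gtrsim\sqrt t$. Hence on $E_3$ we have $r\ge c\sqrt t$, and
\[
t^q r^{1-q}\le t^q (c\sqrt t)^{1-q}=C\,t^{(q+1)/2}.
\]
This gives \eqref{majorieq2}.

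Care is needed at the corner $E_1\cap E_3$ and at points of $E_3$ lying inside $C(t)$. At such points $\omega=1$ and $t^qh\le 1\le C_1$, so the bound holds trivially once $C_1\ge1$. Finally, combining the three boundary estimates with the comparison principle, taking $C_1=\max(1,1/c)$ and $C_2=A\cdot\text{const}$, completes the proof.
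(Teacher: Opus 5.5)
Your proposal is correct and follows the paper's proof essentially step by step: the comparison principle in $G$, $v=0$ on $E_2$, a uniform lower bound $\omega\ge c$ on $E_1$ (the paper simply quotes this as a Carleson-type estimate, whereas you sketch it by rescaling plus a barrier or Harnack chain), and on $E_3$ the bound $f(\alpha)\le \tfrac{\pi}{2}\max_{[0,\pi/2]}|f'|\,\sin\alpha$ combined with Lemma~\ref{geomlem} and $r\ge (2t-t^2)^{1/2}\ge\sqrt t$. Two small remarks: $f(\alpha)\le f'(0)\alpha$ is not justified by \eqref{f1}--\eqref{f2} (it would need concavity), but you only use the $A\sin\alpha$ bound; and your corner discussion is unnecessary, since $E_1\cap E_3=\emptyset$ and $C(t)\subset\{y<0\}$ is disjoint from $E_3$.
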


\begin{proof}
We will apply the Comparison Principle (see \cite{HKM}, Theorem 7.6) in $G$ to the $p$-harmonic functions $v, w$  where $v = t^q h$ and $ w = C_1 \omega + C_2 t^q $ if $p \geq n$ or $w = C_1 \omega + C_2 t^{\frac{q+1}{2}}$ if $1 < p < n$. It is then enough to check that, for suitable choices of the constants $C_1 $ and $C_2$,  
$$
v \leq w \quad \text{on} \, \, \, \partial G = E_1 \cup E_2 \cup E_3 
$$
where $E_1 , E_2, E_3$ are as in \eqref{e1}, \eqref{e2}  and \eqref{e3}. We split the argument into the three parts of the boundary and it is only in the analysis of $E_3$ where the cases $p\geq n$ and $1 < p < n$ need to be distinguished.

\begin{itemize}
\item [i)] Let $z = (x,y) \in E_1$.  Then $r = |z| = t$ and $v(z) = t^q h(z) = t^q r^{-q} f(\alpha) \leq f( \pi /2) = 1$. On the other hand, by the Carleson estimate for $p$-harmonic measure (see \cite{deb2}, Lemma 5.1 or \cite{HKM}, Lemma 11.21), there exists  $c = c(p,n)>0$ so that $\omega (z) \geq c$ for each $z \in E_1$. Then \eqref{majorieq1} and \eqref{majorieq2} hold for $z\in E_1$ with the choice $C_1 = c^{-1}$ and any (positive) choice of $C_2$. \vspace{0.12cm}
\item [ii)] Let $z = (x,y) \in E_2$. Then $r >0$ and $\alpha = 0$ so $f( \alpha ) = 0 = v(z) $. Then \eqref{majorieq1} and \eqref{majorieq2} hold for $z\in E_2$ whatever the choice of (positive) $C_1 , C_2$. \vspace{0.12cm}
\item [iii)] Fix $z = (x,y) \in E_3$ and let $r = |z|$ and $\alpha \in [0, \pi/2]$ the latitude of $z$, so $y = r \sin \alpha$. Then
$$
f(\alpha) = f(\alpha) - f(0) \leq M \alpha \leq \frac{\pi}{2} M \sin \alpha
$$
where $M = \max_{[0, \pi]} |f' |$ only depends on $p$ and $n$. Therefore, by Lemma \ref{geomlem}
\begin{equation}\label{vesti}
v(z) = t^q r^{-q} f( \alpha ) \leq  C t^q r^{-q} \sin \alpha  \leq C t^q r^{1-q}
\end{equation}
where $C = \pi M /2$. Now we consider separately the cases $p\geq n$ and $1 < p \leq  n$. Take first $p \geq n$ (then $0 < q \leq 1$). Since $r\leq 2$ on $E_3$ it follows from \eqref{vesti} that $v(z) \leq 2C t^q$, so \eqref{majorieq1} holds with $C_2 = 2C$. Suppose now that $1 < p \leq n $ (then $q \geq 1$). Since $r \geq (2t - t^2)^{1/2} \geq t^{1/2}$, it follows that $r^{1-q} \leq t^{\frac{1-q}{2}}$. Therefore, from \eqref{vesti},
$$
v(z) \leq C t^q r^{1-q} \leq C t^q t^{\frac{1-q}{2}} = C t^{\frac{q+1}{2}}
$$
and \eqref{majorieq2} holds with the choice $C_2 = C$. The lemma follows from the Comparison Principle.
\end{itemize}
\end{proof}

\section{Rescaling}\label{scale}

In this section we will restrict to the range $1 < p < n$. We start by the following consequence of Lemma \ref{majorilem}, rephrased in the unit ball $\B$. Note that we are interested in the decay of $\omega_p ( J(t),0, \B )$, where $J(t)$ is given by \eqref{defJ}.

\begin{lem}\label{sqrtlem}
Let $1 < p < n$. There exist $0< t_0 \leq 1/4$ and $C>0$, only depending on $p$ and $n$ such that
\begin{equation}\label{sqrt}
\omega_p (J(t) , (1 - \sqrt t ,0,...,0) , \B ) \geq C \, t^{\frac{q}{2}}
\end{equation}
if $0 < t \leq t_0$, where $q = - k(p,n)$.
\end{lem}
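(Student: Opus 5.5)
We plan to evaluate the inequality \eqref{majorieq2} of Lemma \ref{majorilem} at a single point of $G$ on the axis of symmetry and then undo the rotation and translation of Section \ref{majori}. Under the rigid motion sending $\B$ to $B = B(a,1)$, the origin goes to $a=(0,\dots,0,1-t)$ and the point $\mathbf{1}$ goes to $S=(0,\dots,0,-t)$. The cap $J(t)$ goes to $C(t)$, and $p$-harmonic measure is invariant under rigid motions. The point $(1-\sqrt t,0,\dots,0)$ lies on the segment from $0$ to $\mathbf{1}$ at distance $\sqrt t$ from $\mathbf{1}$. Its image is therefore the point $z_t=(0,\dots,0,\sqrt t - t)$ on the segment from $S$ to $a$. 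Hence
$$\omega_p (J(t) , (1 - \sqrt t ,0,\dots,0) , \B ) = \omega_p(C(t), z_t, B) = \omega(z_t).$$

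The next step is to check that $z_t\in G$ for small $t$. Its last coordinate $y=\sqrt t - t$ is positive. We also need $|z_t| = \sqrt t - t > t$, which holds once $\sqrt t > 2t$, that is, $t<1/4$. Finally $z_t\in B$, since $|z_t-a| = 1-\sqrt t<1$. The latitude of $z_t$ is $\alpha=\pi/2$, so $f(\alpha)=1$ and
$$h(z_t)=(\sqrt t - t)^{-q}\ge t^{-q/2}.$$
Applying \eqref{majorieq2} at $z_t$ gives
$$t^{q/2}\le t^q h(z_t) \le C_1\,\omega(z_t) + C_2\, t^{\frac{q+1}{2}}.$$

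The only step needing care is absorbing the error term, and it is harmless: $t^{(q+1)/2} = t^{q/2}\, t^{1/2}$. We choose $t_0\le 1/4$ so small that $C_2 t_0^{1/2}\le 1/2$. Then $C_1\omega(z_t)\ge \tfrac12 t^{q/2}$, which gives \eqref{sqrt} with $C=1/(2C_1)$, for $0<t\le t_0$. This is precisely why the evaluation point is taken at depth $\sqrt t$. At depth $s$ the main term is about $t^q s^{-q}$, while the error term $t^{(q+1)/2}$ does not depend on $s$. So $s$ must be small enough that the main term beats the error, and $s=\sqrt t$ is a natural choice that leaves the spare factor $t^{1/2}$ to absorb it. The constants depend only on $p$ and $n$, since $C_1$ and $C_2$ do.
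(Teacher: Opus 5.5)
Your proof is correct and follows essentially the same route as the paper: evaluate \eqref{majorieq2} at a point on the symmetry axis at height about $\sqrt t$ (where $f=1$), absorb the term $C_2 t^{\frac{q+1}{2}}$ by choosing $t_0$ with $C_2 t_0^{1/2}\le 1/2$, and transfer back to $\B$ via the rigid motion. Your point $z_t=(0,\dots,0,\sqrt t - t)$ is in fact the exact image of $(1-\sqrt t,0,\dots,0)$, whereas the paper evaluates at $\hat z=(0,\dots,0,\sqrt t)$, which corresponds to $(1-\sqrt t - t,0,\dots,0)$ and so tacitly needs a Harnack step; your version avoids this. The borderline case $t_0=1/4$, where $z_t\in E_1$, is harmless because the inequality was checked directly on $E_1$, and you can also just take $t_0<1/4$.
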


\begin{proof}
Let $B = B(a,1)$ and $C(t)$ be as in Lemma  \ref{majorilem}. If $\hat{z} = (0,...0, \sqrt t )$, note that $h(\hat{z}) = t^{-\frac{q}{2}}$ and we get from Lemma  \ref{majorilem}
$$
t^q \, t^{-\frac{q}{2}} = t^{\frac{q}{2}} \leq C_1 \omega_p (C(t) , \hat{z} , B) + C_2 t^{\frac{q+1}{2}}
$$
Choosing $t_0 = \frac{1}{4C_2^2}$ it follows that
$$
\frac{1}{2}t^{\frac{q}{2}} \leq t^{\frac{q}{2}} (1 - C_2 t^{1/2}) \leq C_1 \omega_p (C(t) , \hat{z} , B)
$$
so
\begin{equation}\label{sqrt2}
\omega_p (C(t) , \hat{z} , B) \geq \frac{1}{2C_1} t^{\frac{q}{2}}
\end{equation}
Now \eqref{sqrt} is equivalent to \eqref{sqrt2}, by making use of the corresponding translation and rotation.
\end{proof}

The following rescaling lemma is the key result of the section.

\begin{lem}\label{resclem}
Let $1 < p < n$. There exist $0 < t_0 \leq 1/4$ and $C>0$, only depending on $p$ and $n$ such that
\begin{equation}\label{resceq}
\omega_p (J(t), 0, \B) \geq C \, t^{\frac{q}{2}} \omega_p (J(\sqrt t), 0, \B )
\end{equation}
whenever $0 < t \leq t_0$.
\end{lem}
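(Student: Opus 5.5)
We prove \eqref{resceq} by combining Lemma \ref{sqrtlem} with the comparison principle and Harnack's inequality, viewing $\omega_p(J(\sqrt t),0,\B)$ as a ``first passage'' quantity through a small sphere near $\mathbf{1}$. Set $s=\sqrt t$, $x_s=(1-s,0,\dots,0)$, and let $U=\B\setminus \overline{B(\mathbf{1},\lambda s)}$ for a fixed constant $\lambda$ depending only on $n$ (say $\lambda=4$). The plan is as follows:

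\begin{itemize}
\item[(a)] Show that $u(z)=\omega_p(J(t),z,\B)$ is bounded below by $c\,t^{q/2}$ on the inner boundary $\partial B(\mathbf{1},\lambda s)\cap \B$.
\item[(b)] Compare $u$ with $c\,t^{q/2}\,\omega_p(\partial B(\mathbf{1},\lambda s)\cap\B, \cdot, U)$ in $U$.
\item[(c)] Bound the latter measure from below by $\omega_p(J(\sqrt t),0,\B)$, up to constants.
\end{itemize}

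For step (a), Lemma \ref{sqrtlem} gives $u(x_s)\ge C t^{q/2}$. We need to propagate this to the whole spherical piece $\partial B(\mathbf{1},\lambda s)\cap \B$, which is the main obstacle. Points of this set close to $\partial\B$, far from $J(t)$, are where $u$ is small, so Harnack alone fails there. We would circumvent this by rerunning Lemma \ref{majorilem} from several base points. Rotating the configuration of Lemma \ref{sqrtlem} so that the pole sits at a point $\xi\in\partial\B$ with angle $\le t$ from $\mathbf{1}$ gives lower bounds $\gtrsim t^{q/2}$ at points at distance $\sim\sqrt t$ from $\xi$ along the inward normal, and in fact in a nontangential cone region where $h\approx s^{-q}$. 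For points on $\partial B(\mathbf{1},\lambda s)$ near $\partial\B$ we only need a lower bound by a constant multiple of $t^{q/2}$ times a boundary-vanishing profile. Accordingly, we plan to replace the sphere by a region $W$ of comparable size where Lemma \ref{majorilem} directly gives $u\ge c\,t^{q/2}\,g$, where $g$ is the $p$-harmonic measure of a spherical cap-like set of radius $\sim s$. Concretely: in Lemma \ref{majorilem}, the estimate $t^q h\le C_1\omega+C_2t^{(q+1)/2}$ holds on all of $G$. On the set $\{r\approx s,\ \alpha\ge\alpha_0\}$ it gives $u\gtrsim t^{q/2}$. We therefore take as the inner boundary the cone piece $\{r=\lambda s,\ \alpha\ge\alpha_0\}$ together with $\partial\B$, where the comparison function vanishes.

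For step (b), define $V=\B\setminus \overline{K}$, where $K$ is the portion of $B(\mathbf{1},\lambda s)$ where the lower bound from (a) holds, plus a thin boundary strip. Let $w=\omega_p(\partial K\cap\B,\cdot,V)$. On $\partial K$ we have $u\ge c\,t^{q/2}\ge c\,t^{q/2}w$, and on $\partial\B$ we have $u\ge0=w$ off $K$. The comparison principle (\cite{HKM}, Theorem 7.6), applied to the $p$-harmonic functions $u$ and $c\,t^{q/2}w$ (using the homogeneity of $\triangle_p$), yields $u(0)\ge c\,t^{q/2}w(0)$.

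For step (c), we bound $w(0)\ge c'\,\omega_p(J(\lambda' s),0,\B)$ for a suitable $\lambda'<\lambda$. Compare in $V$ the function $\omega_p(J(\lambda's),\cdot,\B)\le 1$ with $w$. On $\partial K$ we have $w=1$. On $\partial\B\setminus K$, the first function vanishes provided $J(\lambda's)$ lies inside the trace of $K$. This leaves the part of $\partial K$ meeting $\partial\B$, which needs a Carleson-type/boundary Harnack control; we would use the Carleson estimate (\cite{HKM}, Lemma 11.21) to handle it. Finally, we replace $J(\lambda's)$ by $J(\sqrt t)$ through a doubling property of $\omega_p$ for caps, obtained from Harnack chains and the Carleson estimate. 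Alternatively, we can choose $\lambda'\ge1$, so that $J(\sqrt t)\subset J(\lambda's)$ and monotonicity of $\omega_p$ in the set suffices. Adjusting $t_0$ then gives \eqref{resceq}.
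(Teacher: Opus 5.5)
\textbf{There is a genuine gap, and it lies where steps (b) and (c) meet.}

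For (c), you compare $\omega_p(J(\lambda's),\cdot,\B)$ with $w$ in $V=\B\setminus\overline K$. This needs $\omega_p(J(\lambda's),\cdot,\B)$ to vanish on $\partial V\cap\partial\B$, so the trace of $\overline K$ on $\partial\B$ must contain $J(\lambda's)$ with $\lambda's\ge\sqrt t\gg t$. That is why you add the ``thin boundary strip''. But then $\partial K\cap\B$ accumulates at points of $\partial\B\setminus J(t)$. There $u=\omega_p(J(t),\cdot,\B)$ extends continuously with value $0$, because the ball is regular and the datum is continuous off $J(t)$. So the inequality you assert in (b), $u\ge c\,t^{q/2}= c\,t^{q/2}w$ on $\partial K\cap\B$, is false near those points, however thin the strip is.

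Conversely, suppose you drop the strip and take $K$ to be the interior cone $\{r\le\lambda s,\ \alpha\ge\alpha_0\}$, where Lemma \ref{majorilem} does give $u\ge c\,t^{q/2}$. Then (b) is fine, but $w$ vanishes on all of $\partial\B$, including $J(\lambda's)$. The comparison in (c) then breaks down. The two requirements are incompatible whenever the comparison domain has $\partial\B$ as part of its boundary. Invoking the Carleson estimate or a doubling property does not remove this; note also that doubling is not automatic for $p\neq 2$, since $\omega_p$ is not a measure.

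\textbf{The missing idea} is to do the comparison in the shrunken ball $B'=B(0,1-\sqrt t)$, whose boundary lies entirely inside $\B$. Use as boundary datum the indicator of the cap $H(t)=(1-\sqrt t)J(\sqrt t)$.
\begin{itemize}
\item On $\partial B'\setminus H(t)$ the datum is $0$, so $u\ge 0$ is all you need there.
\item Every point of $H(t)$ is at distance exactly $\sqrt t$ from $\partial\B$ and within distance $\sqrt t$ of $(1-\sqrt t)\mathbf{1}$. A Harnack chain of bounded length together with Lemma \ref{sqrtlem} therefore gives $\inf_{H(t)}u\ge c\,t^{q/2}$.
\item The comparison principle in $B'$ then gives $u(0)\ge c\,t^{q/2}\,\omega_p(H(t),0,B')$.
\item Dilation invariance gives $\omega_p(H(t),0,B')=\omega_p(J(\sqrt t),0,\B)$ exactly.
\end{itemize}
The obstacle you identified in (a) (points of the inner boundary near $\partial\B$ where $u$ is small) comes only from your choice of $\B\setminus\overline{B(\mathbf{1},\lambda s)}$. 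With $B'$ it never arises, and you need no extra runs of Lemma \ref{majorilem}, no Carleson estimate and no doubling property.
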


\begin{proof}
Let $B' = B(0, 1 - \sqrt t )$ and  $H(t) = (1 - \sqrt t )J(\sqrt{t} ) $. By the Markov property, Harnack's inequality and Lemma \ref{sqrtlem},
$$
\omega_p (J(t), 0, \B) \geq \omega_p ( H(t) , 0, B') \inf_{z\in H(t)}\omega_p (J(t), z, \B ) \geq C t^{\frac{q}{2}} \omega_p (H(t), 0, B')
$$
Now, by the dilation-invariance of the $p$-laplacian, $\omega_p (H(t), 0, B') = \omega_p (J ({\sqrt t}), 0, \B ) $ and the lemma follows.
\end{proof}

\section{Proof of Theorem \ref{mainth}}\label{proofs}
As mentioned at the introduction, we only need to prove the lower bounds in \eqref{mainth1} and \eqref{mainth2}. We split the argument into the cases $p \geq n$ and $1 < p < n$.

\subsection{The case $ \mathbf{p\geq n}$}
With the notation of Lemma \eqref{majorilem}, it is enough to prove that
\begin{equation}\label{lowest1}
\omega_p (C(t), a, B ) \geq b_1 \, t^q
\end{equation}
if $0 < t \leq t_0 $, where $t_0$ and $b_1$  depend only on $p$ and $n$. If $C_1 , C_2$ are as in \eqref{majorieq1}, choose $r_0 = r_0 (p,n)$ small enough so that $r_0^q \leq (2C_2 )^{-1}$.  Set $P = (0,...,0,r_0 )$. Then, if $0 < t \leq t_0 (p,n)$ we get from Lemma \ref{majorilem},
$$
t^q h(P) = t^q r_0^{-q} \leq C_1 \omega_p (C(t), P, \B) + C_2 t^q
$$
so, from the choice of $r_0$, we obtain $\omega_p (C(t),P, \B) \geq C_2 (C_1 )^{-1} t^q $. \eqref{lowest1} finally follows from Harnack  inequality.

\subsection{The case $ \mathbf{1 < p < n}$}

Observe  that inequality \eqref{resceq} is equivalent to
\begin{equation*}\label{resceq2}
\omega_p (J(t^2) , 0, \B ) \geq C \, t^q \omega_p (J(t) , 0, \B )
\end{equation*}
for $0 < t \leq t_0 = t_{0}(p,n) < 1$. Also, we can assume hereafter that $ 0 < C = C(p,n) < 1$. Define
$$
\phi (t) = \frac{\omega_p (J(t), 0, \B )}{t^q}
$$
Then $\phi$ satisfies the functional inequality
\begin{equation}\label{phi}
\phi (t^2 ) \geq C \, \phi (t)
\end{equation}
if $0 < t \leq t_0$. Note that, by well known properties of $p$-harmonic measure (\cite{HKM}), $\phi $ is strictly positive. Furthermore, observe that if $t \in [t_0^2, t_0 ]$ then the monotonicity of $\omega_p (J(t), 0, \B)$ gives
$$
\phi (t) = \frac{\omega_p (J(t) , 0, \B )}{t^q} \geq \frac{\omega_p (J(t_0^2 ), 0, \B )}{t_0^{q}} = m >0
$$
where $m = m(p,n) >0$.

To complete the proof, we only need to show that the functional inequality \eqref{phi} implies the logarithmic estimate in \eqref{mainth2}, Theorem \ref{mainth}. This is the content of the following elementary lemma (note that $t_0$, $C$, $\beta$, $m$ and $b$ depend only on $p$ and $n$).

\begin{lem}\label{funcineqlem}
Let $0 < t_0 < 1$ and $0 < C < 1$. Suppose that $\phi : (0, t_0] \to (0, +\infty )$ satisfies the functional inequality \eqref{phi} and that $\displaystyle m = \inf_{[t_0^2, t_0]}\phi >0$. Then there exist $\beta = \beta (C) >0$, $b = b(C, t_0, m) >0$ such that
\begin{equation}\label{}
\phi (t) \geq \frac{b}{\big ( \log (\frac{1}{t})  \big )^{\beta} }
\end{equation}
if $0 < t \leq t_0$.
\end{lem}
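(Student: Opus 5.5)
The plan is to iterate the functional inequality \eqref{phi} along the orbit of repeated squaring. Each such orbit eventually lands in the fundamental interval $[t_0^2, t_0]$, where $\phi \geq m$. Counting how many squarings are needed then gives a logarithmic lower bound.

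Fix $0 < t \leq t_0$. Since $t_0 < 1$, the intervals $I_j = [t_0^{2^{j+1}}, t_0^{2^j}]$, $j = 0,1,2,\dots$, cover $(0, t_0]$. Choose $j \geq 0$ with $t \in I_j$, and set $s = t^{2^{-j}}$, so that $s \in [t_0^2, t_0] = I_0$. Every intermediate point $s^{2^i}$ with $0 \leq i < j$ lies in $(0, t_0]$, so \eqref{phi} applies at each step. Applying it $j$ times gives
\begin{equation*}
\phi(t) = \phi\big(s^{2^j}\big) \geq C\, \phi\big(s^{2^{j-1}}\big) \geq \cdots \geq C^j \phi(s) \geq C^j m .
\end{equation*}

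Next I bound $j$ in terms of $t$. From $t \leq t_0^{2^j}$ we get $\log\frac1t \geq 2^j \log\frac{1}{t_0}$. Hence
\begin{equation*}
2^j \leq \frac{\log(1/t)}{\log(1/t_0)}, \qquad j \leq \log_2 \frac{\log(1/t)}{\log(1/t_0)} .
\end{equation*}
Because $0 < C < 1$, this gives
\begin{equation*}
C^j = 2^{-j\log_2(1/C)} \geq \Big(\frac{\log(1/t_0)}{\log(1/t)}\Big)^{\beta}, \qquad \beta = \log_2\frac1C > 0 .
\end{equation*}
Therefore $\phi(t) \geq b\,(\log\frac1t)^{-\beta}$ with $b = m\,(\log\frac1{t_0})^{\beta}$. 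This constant depends only on $C$, $t_0$ and $m$, and $\beta$ depends only on $C$, as the statement requires.

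There is no real obstacle in this argument. The one point needing care is the bookkeeping that every iterate $s^{2^i}$ stays inside the domain $(0, t_0]$ where \eqref{phi} holds. This is immediate because $s \leq t_0 < 1$, so each power $s^{2^i}$ is at most $s$.
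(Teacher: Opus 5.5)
Your proposal is correct and follows essentially the same route as the paper. Both arguments pick the dyadic-exponent level $N$ with $t_0^{2^{N+1}} < t \leq t_0^{2^N}$, iterate \eqref{phi} down to $\phi(t^{2^{-N}}) \geq m$, bound $2^N \leq \log(1/t)/\log(1/t_0)$, and conclude with $\beta = \log_2(1/C)$ and $b = m(\log\frac{1}{t_0})^{\beta}$.
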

\begin{proof}
Set $\displaystyle \beta = \frac{\log \frac{1}{C}}{\log 2}$, so $C = 2^{-\beta}$. By iterating \eqref{phi} we get $\phi (t^{2^N}) \geq C^N \phi (t) = 2^{-\beta N } \phi (t) $. Now fix $t \in (0, t_0]$ and choose $N \in \N$ such that $t_0^{2^{N+1}} < t \leq t_0^{2^N} $ or, equivalently, $\displaystyle t_0^2 < t^{2^{-N}} \leq t_0$. Then
$$
\phi (t) = \phi ((t^{2^{-N}})^{2^N} ) \geq C^N \phi (t^{2^{-N}}) \geq m 2^{-\beta N}
$$
Since $\displaystyle 2^{N} \leq \frac{\log \frac{1}{t}}{\log \frac{1}{t_0}} $ we finally obtain
$$
\phi(t) \geq \frac{m \big ( \log \frac{1}{t_0} \big )^{\beta}}{\big ( \log \frac{1}{t}  \big )^{\beta} }
$$
and the result follows with $b = m \big ( \log \frac{1}{t_0}  \big )^{\beta} $.
\end{proof}

\section{Appendix}
We include here a short argument showing that $k(p,n) \geq -1$ if $p \geq n$ and $k(p,n) \leq  -1 $ if $1 < p \leq  n$, without appealing to any monotonicity result. Let  $h = r^{k} f(\alpha )$ be the singular quasiradial $p$-harmonic function in $\R^n_+$ with pole at $0$, where $k = k(p,n)$ and $f$ satisfies \eqref{f1}, \eqref{f2}.

\

First of all, note that the singular quasiradial $n$-harmonic function in $\R^n_+$ with pole at $0$ is given by
$$
u(z) = \frac{y}{|z|^2}  = r^{-1} \sin \alpha \, , \qquad z=(x,y) \in \R^n_+
$$
where, as usual, $r = |z|$ and $\alpha \in [0, \pi/2 ]$ is the latitude angle. Substituting this expresion in \eqref{ode} it turns out, after some elementary computations, that $\triangle_p  u $ is a positive multiple of $p-n$, so $u$ is $p$-subharmonic if $p\geq n$ and $p$-superharmonic if $1 < p  \leq n$.

\

Suppose first that $p >n$. Consider the domain $\Omega = \R^n_+ \cap \{|z| > 1 \} $. Note that $u = h = 0 $ on  ($\partial \Omega \cap \{ z= (x,0) : x \in \R^{n-1} \} )  \cup \{ \infty \} $. Now, fix $C>0$ so that $C \sin \alpha \leq f( \alpha )$ for all $\alpha \in [0, \pi /2]$. Then $Cu \leq h$ on $\partial \Omega \cap \{ z : |z| = 1 \} $.  Since $u$ is $p$-subharmonic and $h$ is $p$-harmonic, it follows from  the Comparison Principle (\cite{HKM}, Thm. 7.6 ; note that the version here includes the case of unbounded domains) that $Cu \leq h$ in $\Omega$. In particular, evaluation at $(0,...,0,r)$ gives
$$
C r^{-1} \leq r^k
$$
for all $r>1$, implying that $k \geq -1$.

\

The case $1 < p < n$ is similar. Now a constant $D>0$ can be chosen so that $D \sin \alpha \geq f( \alpha )$ for each $\alpha \in [0, \pi /2]$ so, since $u$ is $p$-superharmonic and $h$ is $p$-harmonic, another application of the Comparison Principle gives $ D u \geq h$ in $\Omega$ and, again, evaluation at $(0,...,0,r)$ implies $k \leq -1$.

 \end{document}